\documentclass[12pt]{article}
\usepackage{amsmath}
\usepackage{amsfonts}
\usepackage{amssymb}
\usepackage{hyperref}

\setcounter{MaxMatrixCols}{30}%
\usepackage{graphicx}
\providecommand{\U}[1]{\protect\rule{.1in}{.1in}}
\newtheorem{theorem}{Theorem}

\newtheorem{lemma}[theorem]{Lemma}

\newenvironment{proof}[1][Proof]{\noindent\textbf{#1.} }{{\hfill $\Box$ \\}}
\addtolength{\textheight}{30pt}
\begin{document}

\title{Orbits of irreducible binary forms over GF$(p)$}
\author{Michael Vaughan-Lee}
\date{May 2017}
\maketitle

\begin{abstract}
In this note I give a formula for calculating the number of orbits of
irreducible binary forms of degree $n$ over GF$(p)$ under the action of
GL$(2,p)$. This formula has applications to the classification of class two
groups of exponent $p$ with derived groups of order $p^{2}$.

\end{abstract}

\section{Introduction}

A binary form of degree $n$ over a field $F$ is a homogeneous polynomial%
\[
\alpha_{0}x^{n}+\alpha_{1}x^{n-1}y+\alpha_{2}x^{n-2}y^{2}+\ldots+\alpha
_{n}y^{n}%
\]
in $x,y$ with coefficients in $F$. Two binary forms are taken to be identical
if one is a scalar multiple of the other. We define an action of GL$(2,F)$ on
binary forms as follows. Let $g=\left(
\begin{array}
[c]{cc}%
a & b\\
c & d
\end{array}
\right)  \in\,$GL$(2,F)$, and let%
\[
f=\alpha_{0}x^{n}+\alpha_{1}x^{n-1}y+\alpha_{2}x^{n-2}y^{2}+\ldots+\alpha
_{n}y^{n}.
\]
Then we define%
\[
fg=\alpha_{0}(ax+by)^{n}+\alpha_{1}(ax+by)^{n-1}(cx+dy)+\ldots+\alpha
_{n}(cx+dy)^{n}.
\]
Binary forms over the complex numbers are an important research topic, but
there is very little published in the literature on binary forms over GF$(p)$.
However binary forms over GF$(p)$ do have applications to the classification
of class two groups of exponent $p$ with derived groups of order $p^{2}$
($p>2$). There is an important paper by Vishnevetskii \cite{vish85} in which
he classifies the indecomposable groups of this form. (A group of this form is
indecomposable if it cannot be expressed as a central product of two proper
subgroups.) Let us call a class two group $G$ of exponent $p$ with derived
group of order $p^{2}$ a $(d,2)$ group if $|G/G^{\prime}|=p^{d}$, so that $G$
has $d$ generators. Vishnevetskii shows that if $d$ is odd then there is only
one indecomposable $(d,2)$ group. If $d=3$ then it has a presentation on
generators $a_{1},a_{2},a_{3}$ with a single relation $[a_{1},a_{3}]=1$ in
addition to the relations making it a class two group of exponent $p$. If
$\ d$ is odd and $d>3$ then it has presentation on generators $a_{1}%
,a_{2},\ldots,a_{d}$ with relations%
\begin{gather*}
\lbrack a_{1},a_{2}]=[a_{3},a_{4}]=[a_{5},a_{6}]=\ldots=[a_{d-2},a_{d-1}],\\
\lbrack a_{2},a_{3}]=[a_{4},a_{5}]=\ldots=[a_{d-1},a_{d}],
\end{gather*}
where all other commutators of the generators are trivial. If $d$ is even, say
$d=2n$, then the indecomposable groups of type $(2n,2)$ correspond to orbits
of binary forms $f$ of degree $n$ which have the form $f=g^{k}$ where $g$ is
irreducible. Corresponding to an orbit representative $f$ of degree $n $ we
have an indecomposable group $V_{f}$ on generators%
\[
x_{1},x_{2},\ldots,x_{n},y_{1},y_{2},\ldots,y_{n},z_{1},z_{2}.
\]
The relations on $V_{f}$ are given by a pair of $n\times n$ Scharlau matrices
$A,B$ where $A$ is the identity matrix, and $B$ is the companion matrix of the
polynomial $f(x,1)$. We set $[x_{i},x_{j}]=[y_{i},y_{j}]=1$ for all $i,j$. Let
$A$ have $(i,j)$-entry $a_{ij}$ and let $B$ have $(i,j)$-entry $b_{ij}$. Then
we set $[x_{i},y_{j}]=z_{1}^{a_{ij}}z_{2}^{b_{ij}}$. (The generators
$z_{1},z_{2}$ are assumed to be central, of course.) The type $(2n,2)$ groups
obtained in this way form a complete and irredundant set of indecomposable
groups of type $(2n,2)$. For example, consider the case $n=4$. If $f=g^{k}$
where $g$ is irreducible, then $k=1,2$ or 4. If $k=1 $ then then $f$ is
irreducible, and there are $\frac{p+1}{2}$ orbits of irreducible quartic
binary forms over GF$(p)$. If $k=2$ then $f=g^{2}$ where $g$ is an irreducible
quadratic, and there is only one orbit of irreducible quadratics. And if $k=4$
then $f=g^{4}$ where $g$ is irreducible of degree 1, and there is only one
orbit of irreducible binary forms of degree 1. So there are $\frac{p+5}{2}$
indecomposable groups of type $(8,2)$, and if we pick representatives for the
orbits of irreducible binary forms of degrees 1,2 and 4 we can write down
presentations for the groups.

Vishnevetskii \cite{vish82} gives a formula to compute the number of orbits of
irreducible binary forms of degree $n$ over GF$(p)$ for the cases when $n$ is
coprime to $p+1$. I have managed to extend Vishnevetskii's formula to cover all
$n>2$. (When $n\leq2$ there is exactly one orbit for all $p$.) When $n=3 $
there is one orbit for all $p$. When $n=4$ there is one orbit for $p=2$ and
$\frac{p+1}{2}$ orbits for $p>2$. When $n=5$ there is one orbit for $p=2$, six
orbits for $p=5$ and $\frac{1}{5}(p^{2}-1+2\gcd(p^{2}-1,5))$ orbits for
$p\neq2,5$. In general if $p$ is an odd prime coprime to $n$ then the number
of orbits is one of $\varphi(n)$ polynomials in $p$, with the choice of
polynomial depending on $p\operatorname{mod}n$. (If $p$ is coprime to $n$,
then $p\operatorname{mod}n$ is coprime to $n$.) So the number of orbits is
polynomial on residue classes (PORC). I have a \textsc{Magma} program which
computes these $\varphi(n)$ polynomials (with symbolic $p$) for any given $n$.
I also have a \textsc{Magma} program which computes the number of orbits for
any given prime $p$ and any given $n$, including the prime 2 and primes
dividing $n$. The programs are superficially quite complicated, but their
complexity is bounded by $k^{2}$ where $k$ is the number of divisors of $n$.
The programs can be found on my website \url{http://users.ox.ac.uk/~vlee/PORC/orbitsirredpols}.

\section{Vishnevetskii's method}

Let $S$ be the set of irreducible binary forms of degree $n$ over GF$(p)$, and
let $G=\,$GL$(2,p)$. We have an action of $G$ on $S$, and the number of orbits
is given by Burnside's Lemma:%
\[
\frac{1}{|G|}\sum_{g\in G}\text{fix}(g),
\]
where fix$(g)$ is the number of elements $s\in S$ such that $sg=s$. Since we
have a group action, fix$(g)$ depends only on the conjugacy class of $g$, and
so we only need to compute fix$(g)$ for one representative $g$ from each
conjugacy class. There are four types of conjugacy class.

\begin{enumerate}
\item There are $p-1$ conjugacy classes of size one, each containing an
element of the form $g=\left(
\begin{array}
[c]{cc}%
\lambda & 0\\
0 & \lambda
\end{array}
\right)  $. For $g$ of this form%
\[
\text{fix}(g)=|S|=\sum_{d|n}\mu(d)p^{n/d},
\]
where $\mu$ is the M\"{o}bius function.

\item There are $p-1$ conjugacy classes of size $p^{2}-1$, each containing an
element $g=\left(
\begin{array}
[c]{cc}%
\lambda & \lambda\\
0 & \lambda
\end{array}
\right)  $. For each of these elements
\[
\text{fix}(g)=\,\text{fix}\left(
\begin{array}
[c]{cc}%
1 & 1\\
0 & 1
\end{array}
\right)  .
\]
We will show in Section 3 that this is given by Vishnevetskii's function
$B(p,n)$ from \cite{vish82}, where $B(p,n)=0$ if $p\nmid n$ and where%
\[
B(p,n)=\frac{p-1}{n}\sum_{d|n,\,p\nmid d}\mu(d)p^{n/pd}%
\]
if $p|n$.

\item There are $\frac{1}{2}(p-1)(p-2)$ conjugacy classes of size $p^{2}+p$
each containing an element $g=\left(
\begin{array}
[c]{cc}%
\lambda & 0\\
0 & \mu
\end{array}
\right)  $ with $\lambda\neq\mu$. We will show in Section 4 below that if
$g=\left(
\begin{array}
[c]{cc}%
\lambda & 0\\
0 & \mu
\end{array}
\right)  $ then fix$(g)$ depends only on the multiplicative order of
$\frac{\lambda}{\mu}$. We will show that if the order of $\frac{\lambda}{\mu}$
does not divide $n$ then fix$(g)=0$, and that if the order is $e$ where $e$
divides $n$ then fix$(g)$ is given by Vishnevetskii's function $A(p,n,e)$ from
\cite{vish82}. If $e|n$ then we write $\frac{n}{e}=kr$ where $k$ is the
largest possible divisor of $n$ which is coprime to $e$ and then%
\[
A(p,n,e)=\frac{\varphi(e)}{n}\sum_{d|k}\mu(d)(p^{kr/d}-1).
\]
Note that for each $e>1$ dividing $p-1$ there are $\varphi(e)\frac{p-1}{2}$
conjugacy classes containing an element $\left(
\begin{array}
[c]{cc}%
\lambda & 0\\
0 & \mu
\end{array}
\right)  $ with $\frac{\lambda}{\mu}$ of order $e$.

\item There are $\frac{1}{2}p(p-1)$ conjugacy classes of size $p^{2}-p$,
containing elements $g\in G$ whose eigenvalues do not lie in GF$(p)$. Let $N$
be the central subgroup of $G$ consisting of the matrices $\lambda I$. We will
show in Section 5 that if $g$ lies in one of these conjugacy classes then
fix$(g)$ depends only on the order of $gN$. Note that this order must divide
$p+1$, and that for every $e>1$ dividing $p+1$ there are $\varphi(e)\frac
{p-1}{2}$ conjugacy classes of this form containing elements $g$ with $gN$ of
order $e$. We will show that if $gN$ has order $e$ where $e\nmid n$, then
fix$(g)=0$. Vishnevetskii does not have an expression for fix$(g)$ when $e|n$,
and this is why his formula only applies when $n$ is coprime to $p+1$. We will
obtain a function $C(p,n,e)$ in Section 5 which gives fix$(g)$ in the case
when $e|\gcd(n,p+1)$.
\end{enumerate}

Putting all this together we see that the number of orbits is%
\[
\frac{1}{|G|}(a+b+c+d),
\]
where%
\begin{align*}
a  & =(p-1)\sum_{d|n}\mu(d)p^{n/d},\\
b  & =(p-1)B(p,n),\\
c  & =\sum_{e|(n,p-1),\,e\neq1}\varphi(e)\frac{p-1}{2}(p^{2}+p)A(p,n,e),\\
d  & =\sum_{e|(n,p+1),\,e\neq1}\varphi(e)\frac{p-1}{2}(p^{2}-p)C(p,n,e).
\end{align*}

\section{Vishnevetskii's formula $B(p,n)$}

We need to calculate fix$(g)$ when $g=\left(
\begin{array}
[c]{cc}%
1 & 1\\
0 & 1
\end{array}
\right)  $. We take $x^{n}$, $x^{n-1}y$, $x^{n-2}y^{2}$, \ldots, $y^{n}$ as a basis for
the space of homogeneous polynomials of degree $n$ in $x,y$ over GF$(p)$. Then
$g$ maps these basis elements to
\[
(x+y)^{n},\,(x+y)^{n-1}y,\,\ldots,\,y^{n},
\]
and so the matrix $A$ giving the action of $g$ is an upper triangular matrix
with 1's down the diagonal, and entries $n,n-1,\ldots,2,1$ down the
superdiagonal. The only eigenvalue is 1, and we need to count the number of
eigenvectors corresponding to irreducible polynomials. (We treat two
eigenvectors as being equal if one is a scalar multiple of the other.) An
eigenvector of $A$ corresponding to an irreducible polynomial must have
non-zero first entry, but $(1,\ast,\ast,\ldots,\ast)$ can only be an
eigenvector if the $(1,2)$-entry of $A$ is zero. So the number of eigenvectors
corresponding to irreducible polynomials is zero unless $n=0\operatorname{mod}%
p$. So suppose that $p|n$. Then the superdiagonal of $A$ has $\frac{n}{p}$
zero entries, and $\frac{(p-1)n}{p}$ non-zero entries. This implies that the
dimension of the eigenspace of $A$ is at most $1+\frac{n}{p} $. Now $y$ and
$x^{p}-xy^{p-1}$ are both fixed by $g$, and so if we let $k=\frac{n}{p}$ then
we see that the following $1+\frac{n}{p}$ polynomials are all fixed by $g$:%
\[
(x^{p}-xy^{p-1})^{k},\,(x^{p}-xy^{p-1})^{k-1}y^{p},\,\ldots,\,
(x^{p}-xy^{p-1})y^{(k-1)p},\,y^{kp}.
\]
So the eigenspace of $A$ corresponding to eigenvalue 1 has dimension
$1+\frac{n}{p}$ and these polynomials form a basis for the space of all
polynomials of degree $n$ which are fixed by the $g$. So we need to count the
number of irreducible polynomials%
\[
(x^{p}-xy^{p-1})^{k}+\alpha_{k-1}(x^{p}-xy^{p-1})^{k-1}y^{p}+\ldots+\alpha
_{1}\,(x^{p}-xy^{p-1})y^{(k-1)p}+\alpha_{0}y^{kp}%
\]
with $\alpha_{0},\alpha_{1},\ldots,a_{k-1}\in\,$GF$(p)$.

Let $F=\,$GF$(p)$, let $K=\,$GF$(p^{k})$ and let $L=\,$GF$(p^{n})$. We want to
count the number of irreducible polynomials $f(x^{p}-x)$ where $f(x)$ is a
polynomial of degree $k$. Clearly, if $f(x^{p}-x)$ is irreducible then $f(x)$
is irreducible, and so $f(x)$ splits in $K$ and $f(x^{p}-x)$ splits in $L$.
Let $\alpha$ be a root of $f(x)$ in $K$. Then $x^{p}-x-\alpha\in K[x]$ divides
$f(x^{p}-x)$, and so $x^{p}-x-\alpha$ splits in $L$. We then have
$K=F(\alpha)$, $L=F(\beta)$ if $\beta$ is any root of $x^{p}-x-\alpha$. So
$f(x)$ is the minimum polynomial over $F$ of an element $\alpha\in K$ where
$\alpha$ does not lie in any proper subfield of $K$ and $\alpha\neq\gamma
^{p}-\gamma$ for any $\gamma\in K$.

Conversely let $\alpha\in K$, and assume that $\alpha$ does not lie in any
proper subfield of $K$. Also assume that $\alpha\neq\gamma^{p}-\gamma$ for any
$\gamma\in K$. Let $f(x)$ be the minimum polynomial of $\alpha$ over $F$.
Since $\alpha$ does not lie in any proper subfield of $K$, we have
$K=F(\alpha)$, and so $f$ has degree $k$. Also since $\alpha\neq\gamma
^{p}-\gamma$ for any $\gamma\in K$ we see that $x^{p}-x-\alpha\in K[x]$ does
not split in $K$. Let $\beta$ be a root of $x^{p}-x-\alpha$ in a splitting
field $M$ for $x^{p}-x-\alpha$. The the other $p-1$ roots are $1+\beta
,2+\beta,\ldots,p-1+\beta$. Since $\beta\notin K$ the minimum polynomial of
$\beta$ over $K$ has a root $i+\beta$ with $i\neq0$, and so there is an
automorphism $\theta$ in the Galois group of $M$ over $K$ such that
$\theta(\beta)=i+\beta$. But then $\theta$ has order $p$ so that $|M:K|=p$ and
$|M:F|=p^{n}$. So the minimum polynomial of $\beta$ over $F$ has degree $n$.
But $f(x^{p}-x)$ has a root $\beta$ and has degree $n$, so must be irreducible.

So we need to count the number of elements $\alpha\in K$ which do not lie in
any proper subfield of $K$ and which cannot be expressed in the form
$\gamma^{p}-\gamma$ with $\gamma\in K$.

The map $\varphi:K\rightarrow K$ given by $\varphi(\gamma)=\gamma^{p}-\gamma$
is an additive homomorphism of $K$ with kernel $F$. So $|\operatorname{Im}%
\varphi|=p^{k-1}$, and the set $S=K\backslash\operatorname{Im}\varphi$
contains $\frac{p-1}{p}p^{k}$ elements. We need to count the number of
elements of $S$ which do not lie in a proper subfield of $K$. So let $M$ be a
proper subfield of $K$. We show that if $p$ divides $|K:M|$ then $M\cap
S=\varnothing$, and that if $|K:M|$ is coprime to $p$ then $|M\cap
S|=\frac{p-1}{p}|M|$.

First consider the case when $p$ divides $|K:M|$. If $\alpha\in M$ is not
equal to $\gamma^{p}-\gamma$ for any $\gamma\in M$ then, as we saw above, the
splitting field of $x^{p}-x-\alpha$ over $M$ has degree $p$ over $M$ and so
must be contained in $K$. So $M\cap S=\varnothing$.

Now consider the case when $|K:M|$ is coprime to $p$. We show that if
$\beta\in K\backslash M$ then $\beta^{p}-\beta\in K\backslash M$. For suppose
that $\beta\in K\backslash M$ and $\beta^{p}-\beta=\alpha\in M$. Then the
splitting field of $x^{p}-x-\alpha$ over $M$ has degree $p$ over $M$ and is
contained in $K$, which is impossible. It follows that $\varphi(K\backslash
M)\subset K\backslash M$ so that $|S\cap(K\backslash M)|=\frac{p-1}%
{p}|K\backslash M|$ and $|S\cap M|=\frac{p-1}{p}|M|$.

Putting all this together we see that the number of elements of $S$ which do
not lie in any proper subfield of $K$ is%
\[
\frac{p-1}{p}\sum_{d}\mu(d)p^{n/pd}%
\]
where the sum is taken over all $d$ dividing $k$ which are coprime to $p$. It
follows from this that the number of polynomials $f(x)$ of degree $k$ with
$f(x^{p}-x)$ irreducible is%
\[
\frac{p-1}{n}\sum_{d}\mu(d)p^{n/pd},
\]
and this is Vishnevetskii's function $B(p,n)$.

\section{Vishnevetskii's function $A(p,n,e)$}

Let $g=\left(
\begin{array}
[c]{cc}%
\lambda & 0\\
0 & \mu
\end{array}
\right)  $ with $\lambda\neq\mu$. Then fix$(g)=\,$fix$(h)$ where $h=\left(
\begin{array}
[c]{cc}%
\nu & 0\\
0 & 1
\end{array}
\right)  $, with $\nu=\frac{\lambda}{\mu}$. We take $x^{n},x^{n-1}%
y,x^{n-2}y^{2},\ldots,y^{n}$ as a basis for the space of homogeneous
polynomials of degree $n$ in $x,y$ over GF$(p)$. Then $h$ maps these basis
elements to%
\[
\nu^{n}x^{n},\,\nu^{n-1}x^{n-1}y,\,\nu^{n-2}x^{n-2}y^{2},\,\ldots,\,y^{n}.
\]
So the matrix $M$ giving the action of $h$ is a diagonal matrix with entries
$\nu^{n},\nu^{n-1},\ldots,1$ down the diagonal. We need to count eigenvectors
of $M$ corresponding to irreducible polynomials, and such an eigenvector must
have non-zero first entry and non-zero last entry. So fix$(h)=0$ unless
$\nu^{n}=1$. So suppose that $\nu$ has order $e$ where $e|n$. Then the
eigenspace corresponding to eigenvalue 1 is spanned by $x^{n}$, $x^{n-e}y^{e}%
$, $x^{n-2e}y^{2e}$, \ldots, $y^{n}$, and so if we set $\frac{n}{e}=kr$ where
$k$ is the largest possible divisor of $n$ which is coprime to $e$, we need to
count irreducible polynomials of the form%
\[
x^{n}+\alpha_{1}x^{n-e}y^{e}+\alpha_{2}x^{n-2e}y^{2e}+\ldots+\alpha_{kr}y^{n}.
\]
This is equivalent to counting the number of irreducible polynomials
$f(x^{e})$ where $f(x)$ is a polynomial of degree $kr$.

Let $F=\,$GF$(p)$, $K=\,$GF$(p^{kr})$ and let $L=\,$GF$(p^{n})$. Let $f(x)\in
F[x]$ have degree $kr$ and suppose that $f(x^{e})$ is irreducible. Then $L$ is
the splitting field of $f(x^{e})$. Also, $f(x)$ must be irreducible over $F$,
so that $K$ is the splitting field of $f(x)$. Let $\alpha\in K$ be a root of
$f(x)$, so that $K=F(\alpha)$, and $\alpha$ does not lie in any proper
subfield of $K$. Then $x^{e}-\alpha$ divides $f(x^{e})$ over $K$, and so
$x^{e}-\alpha$ has a root $\beta\in L$, and $L=K(\beta)$. Since $|L:K|=e$,
$x^{e}-\alpha$ must be irreducible over $K$, and so $\alpha$ must be non-zero
and cannot be the $q^{th}$ power of an element of $K$ for any prime $q$
dividing $e$.

Conversely, suppose that $\alpha$ is a non-zero element of $K$ which does not
lie in any proper subfield of $K$, and suppose that $\alpha$ is not equal to a
$q^{th}$ power of an element of $K$ for any prime $q$ dividing $e$. Let $f(x)$
be the minimum polynomial of $\alpha$ over $F$. Since we must have
$K=F(\alpha)$, $f(x)$ is an irreducible polynomial of degree $kr$. We show
that $f(x^{e})$ is irreducible over $F$.

Let $M$ be the splitting field of $x^{e}-\alpha$ over $K$, and let $\gamma\in
M$ be a root of $x^{e}-\alpha$. Since $p=1\operatorname{mod}e$, there is a
primitive $e^{th}$ root of unity $\zeta\in F$, and the roots of $x^{e}-\alpha$
in $M$ are%
\[
\gamma,\zeta\gamma,\zeta^{2}\gamma,\ldots,\zeta^{e-1}\gamma.
\]
So the conjugates of $\gamma$ over $K$ have the form $\zeta^{i}\gamma$, and
the set of powers $\zeta^{i}$ such that $\zeta^{i}\gamma$ is conjugate to
$\gamma$ form a subgroup of $\langle\zeta\rangle$. Let this subgroup have
order $m$ dividing $e$. Then the minimum polynomial of $\gamma$ over $K$ is
$x^{m}-\gamma^{m}$ (with $\gamma^{m}\in K$). So%
\[
\alpha=\gamma^{e}=(\gamma^{m})^{e/m}.
\]
Our assumption that $\alpha$ is not equal to a $q^{th}$ power of an element of
$K$ for any prime $q$ dividing $e$ implies that $m=e$. So $|M:K|=e$ and
$M=L=F(\gamma)$. This implies that the minimum polynomial of $\gamma$ over $F$
has degree $n$. But $f(x^{e})$ is a polynomial of degree $n$ which has
$\gamma$ as a root, and so $f(x^{e})$ must be the minimum polynomial of
$\gamma$ over $F$, and must be irreducible.

So to count the number of irreducible polynomials of the form $f(x^{e})$ where
$f$ has degree $kr$ we need to count the number of elements $\alpha\in K$
which do not lie in any proper subfield of $K$ and are not $q^{th}$ powers of
elements of $K$ for any prime $q$ dividing $e$. Let $S$ be the set of elements
in $K$ which are not $q^{th}$ powers of elements in $K$ for any prime $q$
dividing $e$. The non-zero elements of $K$ form a cyclic group $G$ of order
$p^{kr}-1$, and $e|p-1$, so $|S|=\frac{\varphi(e)}{e}(p^{kr}-1)$. However we
need to take account of elements of $S$ which lie inside proper subfields of
$K$. So let $M$ be a proper subfield of $K$, with $|K:M|=t$.

First consider the case when $q|t$ for some prime $q$ dividing $e$. Then if
$\alpha\in M$ is not a $q^{th}$ power of an element in $M$ then $x^{q}-\alpha$
is irreducible over $M$. (This uses the fact that $F$ contains primitive
$q^{th}$ roots of unity.) So the splitting field of $x^{q}-\alpha$ over $M$
has degree $q$ over $M$, and must be contained in $K$ since $q$ divides
$|K:M|$. So $|S\cap M|=0$.

Next consider the case when $t$ is coprime to $e$. Note that this implies that
$t|k$. We show that in this case $|S\cap M|=\frac{\varphi(e)}{e}(|M|-1) $. To
see this suppose that $\alpha\in M\backslash S$. Then $\alpha=\gamma^{q}$ for
some $\gamma\in K$ and some prime $q|e$. If $\gamma\notin M$ then
$x^{q}-\alpha$ is irreducible over $M$, so that the splitting field of
$x^{q}-\alpha$ over $M$ is an extension of $M$ of degree $q$. But this is
impossible since this splitting field is $M(\gamma)$ which is a subfield of
$K$, and $q$ does not divide $|K:M|$. So if $\alpha\in M\backslash S$ then
$\alpha$ is a $q^{th}$ power of some element of $M$ for some $q|e$. Hence
$|S\cap M|=\frac{\varphi(e)}{e}(|M|-1)$.

It follows from this that the number of elements of $S$ which do not lie in
proper subfields of $K$ is%
\[
\frac{\varphi(e)}{e}\sum_{d|k}\mu(d)(p^{kr/d}-1).
\]

Each of these elements has a minimal polynomial $f(x)$ of degree $kr$ with
$f(x^{e})$ irreducible, and so the number of these polynomials is%
\[
\frac{\varphi(e)}{n}\sum_{d|k}\mu(d)(p^{kr/d}-1).
\]

\section{My function $C(p,n,e)$}

If $e|\gcd(n,p+1)$ and $e>1$ then we write $\frac{n}{e}=kr$ where $k$ is the
largest possible divisor of $n$ which is coprime to $e$. We define%
\[
C(p,n,e)=\frac{\varphi(e)}{n}\sum_{d|k}\mu(\frac{k}{d})(p^{rd}%
-1+2(rd\operatorname{mod}2)).
\]
We show that if $g\in G$ has eigenvalues that do not lie in GF$(p)$, and if
$gN$ has order $e$ then fix$(g)=0$ if $e\nmid n$, and fix$(g)=C(p,n,e)$ if
$e|n$.

Let $g\in G$ have eigenvalues that do not lie in GF$(p)$. Then $g$ is
conjugate to an element $h=\left(
\begin{array}
[c]{cc}%
0 & r\\
1 & s
\end{array}
\right)  $ where $x^{2}-sx-r$ is irreducible over GF$(p)$, and fix$(g)=\,$%
fix$(h)$. So we assume that $g=\left(
\begin{array}
[c]{cc}%
0 & r\\
1 & s
\end{array}
\right)  $. Let $\lambda,\lambda^{p}$ be the eigenvalues of $g$ in GF$(p^{2}%
)$. Then $g$ has eigenvectors $(1,\lambda)$, $(1,\lambda^{p})$ with these
eigenvalues. Let $U$ be the space of homogeneous polynomials of degree $n$ in
$x,y$ over GF$(p^{2})$, and let $V$ be the space of homogeneous polynomials of
degree $n$ in $x,y$ over GF$(p)$. Then $U$ has a basis%
\[
(x+\lambda y)^{n},(x+\lambda y)^{n-1}(x+\lambda^{p}y),(x+\lambda
y)^{n-2}(x+\lambda^{p}y)^{2},\ldots,(x+\lambda^{p}y)^{n}.
\]
The element $g$ acts as a linear transformation $T_{g}$ on $U$, and these
basis vectors are eigenvectors for $T_{g}$ with eigenvalues $\lambda^{n}$,
$\lambda^{n-1}\lambda^{p}$, \ldots, $\lambda^{pn}$. We show that fix$(g)=0 $
unless $\lambda^{n}=\lambda^{pn}$. (Note that if $\lambda^{n}=\lambda^{pn}$
then $g^{n}=\left(
\begin{array}
[c]{cc}%
\lambda^{n} & 0\\
0 & \lambda^{n}%
\end{array}
\right)  \in N$, and the order of $gN$ divides $n$.)

The element $g$ acts as a linear transformation $S_{g}$ on $V$, and $v\in V$
is fixed by $g$ if and only if $v$ is an eigenvector for $S_{g}$. So let $v\in
V$ and suppose that $vS_{g}=\mu v$ with $\mu\in\,$GF$(p)$. The eigenvalues of
$S_{g}$ are also eigenvalues of $T_{g}$, and so%
\[
\mu\in\{\lambda^{n},\lambda^{n-1}\lambda^{p},\ldots,\lambda^{np}\}.
\]
Now $\mu=\mu^{p}$, and so if $\lambda^{n}\neq\lambda^{np}$ then%
\[
\mu\in\{\lambda^{n},\lambda^{n-1}\lambda^{p},\ldots,\lambda^{np}%
\}\backslash\{\lambda^{n},\lambda^{np}\}.
\]
But this implies that $v$ lies in the GF$(p^{2})$ span of%
\[
(x+\lambda y)^{n-1}(x+\lambda^{p}y),(x+\lambda y)^{n-2}(x+\lambda^{p}%
y)^{2},\ldots,(x+\lambda y)(x+\lambda^{p}y)^{n-1},
\]
and hence that $v$ has a factor%
\[
(x+\lambda y)(x+\lambda^{p}y)=x^{2}+sxy-ry^2
\]
and is not irreducible. (We are assuming that $n>2$.) So if $\lambda^{n}%
\neq\lambda^{np}$ then fix$(g)=0$, as claimed.

So assume that $\lambda^{n}=\lambda^{np}$ and let $gN$ have order $e$ dividing
$n$. Then $e|p+1$, and $e$ is the smallest value of $s$ such that
$\lambda^{s}\in\,$GF$(p)$. As we have seen, if $v$ is an irreducible
polynomial in $V$ then $v$ is an eigenvector for $S_{g}$ with eigenvalue
$\lambda^{n}$. The dimension of this eigenspace is $1+\frac{n}{e}$ and we
obtain a basis for the eigenspace as follows. As in the definition of
$C(n,p,e)$ let $\frac{n}{e}=kr$ where $k$ is the largest divisor of $n$ which
is coprime to $e$. Let%
\begin{align*}
a(x,y)  & =\frac{1}{2}\left(  (x+\lambda y)^{e}+(x+\lambda^{p}y)^{e}\right)
,\\
b(x,y)  & =\frac{1}{e(\lambda-\lambda^{p})}\left(  (x+\lambda y)^{e}%
-(x+\lambda^{p}y)^{e}\right)  .
\end{align*}
Then $a$ and $b$ are homogeneous polynomials of degree $e$ in GF$(p)[x,y]$.
The coefficient of $x^{e}$ in $a$ is 1, and the coefficient of $y^{e}$ in $a$
is $\lambda^{e}$. The coefficients of $x^{e}$ and $y^{e}$ in $b$ are zero, and
the coefficient of $x^{e-1}y$ is 1. The eigenspace of $S_{g}$ for eigenvector
$\lambda^{n}$ has basis%
\[
a^{kr},a^{kr-1}b,a^{kr-2}b^{2},\ldots,b^{kr}.
\]
So to calculate fix$(g)$ we need to count the number of irreducible
polynomials of the form%
\[
\alpha_{0}a^{kr}+\alpha_{1}a^{kr-1}b+\alpha_{2}a^{kr-2}b^{2}+\ldots
+\alpha_{kr}b^{kr}.
\]
Since $b$ is divisible by $x$ we can assume that $\alpha_{0}=1$.

So assume that%
\[
h(x,y)=a^{kr}+\alpha_{1}a^{kr-1}b+\alpha_{2}a^{kr-2}b^{2}+\ldots+\alpha
_{kr}b^{kr}%
\]
is irreducible, and let%
\[
f(x)=x^{kr}+\alpha_{1}x^{kr-1}+\alpha_{2}x^{kr-2}+\ldots+\alpha_{kr}.
\]
Then $f$ must be irreducible, and so $f$ has splitting field $K=\,$%
GF$(p^{kr})$. Let $F=\,$GF$(p)$, and let $L=\,$GF$(p^{n})$. Since $h(x,1)$ is
irreducible over $F$ it splits over $L$. Let $\beta$ be a root of $f(x)$ in
$K$. Then $a(x,1)-\beta b(x,1)\in K[x]$ divides $h(x,1)$, and so splits in $L
$. Let $\gamma$ be a root of $a(x,1)-\beta b(x,1)$ in $L$, so that
$L=F(\gamma)$. So $|K(\gamma):K|=|L:K|=e$, which implies that $a(x,1)-\beta
b(x,1)$ is irreducible over $K$.

Conversely, suppose that $\beta\in K$ generates $K$ over $F$, and suppose that
$a(x,1)-\beta b(x,1)$ is irreducible over $K$. Let%
\[
f(x)=x^{kr}+\alpha_{1}x^{kr-1}+\alpha_{2}x^{kr-2}+\ldots+\alpha_{kr},
\]
be the minimum polynomial of $\beta$, and let%
\[
h(x,y)=a^{kr}+\alpha_{1}a^{kr-1}b+\alpha_{2}a^{kr-2}b^{2}+\ldots+\alpha
_{kr}b^{kr}.
\]
Since $a(x,1)-\beta b(x,1)$ is irreducible over $K$ its splitting field over
$K$ is $L$. Let $\gamma$ be a root of $a(x,1)-\beta b(x,1)$ in $L$, so that
$L=K(\gamma)=F(\beta,\gamma)$. Provided $b(\gamma,1)\neq0$ we see that
$\beta\in F(\gamma)$, so that $L=F(\gamma)$ which implies that the minimum
polynomial of $\gamma$ over $F$ has degree $n$. This implies that $h(x,1)$ is
the minimum polynomial of $\gamma$ over $F$ so that $h(x,1)$ and $h(x,y)$ are
irreducible. However we cannot have $b(\gamma,1)=0$ since this would imply
that $a(\gamma,1)=b(\gamma,1)=0$ and this would imply that $a(x,1)$ and
$b(x,1)$ have a common factor over $F$ so that $a(x,1)-\beta b(x,1)$ would not
be irreducible over $K$.

So to count irreducible polynomials of the form $h(x,y)$ we need to count
elements $\beta\in K$ such that $\beta$ generates $K$ over $F$ and such that
$a(x,1)-\beta b(x,1)$ is irreducible over $K$. As a step towards this we prove
the following lemma.

\begin{lemma}
Let $M=\,$GF$(p^{m})$. If $m$ is even then the number of elements $\beta\in M$
such that $a(x,1)-\beta b(x,1)$ is irreducible over $M$ is $\frac{\varphi
(e)}{e}(p^{m}-1)$, and if $m$ is odd then the number of elements $\beta\in M $
such that $a(x,1)-\beta b(x,1)$ is irreducible over $M$ is $\frac{\varphi
(e)}{e}(p^{m}+1)$.
\end{lemma}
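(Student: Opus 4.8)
The plan is to parametrise the roots of $f_{\beta}(x):=a(x,1)-\beta b(x,1)$ by a Kummer substitution and then decide when the Frobenius of $M=\mathrm{GF}(p^{m})$ permutes those roots in a single $e$-cycle. Over $\mathrm{GF}(p^{2})$ put $u=x+\lambda$, $v=x+\lambda^{p}$ and $c=\tfrac12 e(\lambda-\lambda^{p})$; then $c\in\mathrm{GF}(p^{2})$, $c^{2}\in\mathrm{GF}(p)$ is a non-square, and $c^{p}=-c$. From the definitions $f_{\beta}(x)=\tfrac12(u^{e}+v^{e})-\tfrac{\beta}{e(\lambda-\lambda^{p})}(u^{e}-v^{e})$, which is monic of degree $e$; for $\beta\neq\pm c$, dividing $f_{\beta}(x)=0$ by $v^{e}$ and writing $t=u/v$ shows that the roots of $f_{\beta}$ correspond bijectively, via the M\"obius map $\gamma\mapsto t=\tfrac{\gamma+\lambda}{\gamma+\lambda^{p}}$ (inverse $t\mapsto\gamma=\tfrac{t\lambda^{p}-\lambda}{1-t}$), to the $e$ solutions $t_{0}\zeta^{i}$ $(i\in\mathbb{Z}/e)$ of $T^{e}=w$, where $w=\tfrac{\beta+c}{\beta-c}$, $\zeta$ is a fixed primitive $e$-th root of unity (which lies in $\mathrm{GF}(p^{2})$ since $e\mid p+1$) and $t_{0}$ is any fixed solution. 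Since $w\neq0$ the $t_{0}\zeta^{i}$ are distinct, so $f_{\beta}$ is separable, and hence $f_{\beta}$ is irreducible over $M$ exactly when the Frobenius $\Phi\colon z\mapsto z^{p^{m}}$ (a generator of $\mathrm{Gal}(\overline{\mathrm{GF}(p)}/M)$, permuting the roots of $f_{\beta}$) acts transitively on $\{\gamma_{0},\dots,\gamma_{e-1}\}$.

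Next I would pin down the range of $w$ and compute the $\Phi$-action. If $m$ is even then $c\in M$, so $w\in M$, and $\beta\mapsto w$ is a bijection from $M\setminus\{c\}$ onto $M\setminus\{1\}$; moreover $\beta=c$ resp.\ $\beta=-c$ gives $f_{\beta}=(x+\lambda^{p})^{e}$ resp.\ $(x+\lambda)^{e}$, which is reducible. If $m$ is odd then $c^{p^{m}}=c^{p}=-c$ gives $w^{p^{m}}=w^{-1}$, so $w$ lies in the subgroup $T$ of $\mathrm{GF}(p^{2m})^{*}$ of elements of norm $1$ over $M$ (equivalently $z^{p^{m}+1}=1$), which is cyclic of order $p^{m}+1$; and since $c\notin M$, $\beta\mapsto w$ is a bijection from $M$ onto $T\setminus\{1\}$. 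For the Frobenius action one uses $\lambda^{p^{m}}=\lambda$, $\zeta^{p^{m}}=\zeta$ when $m$ is even, and $\lambda^{p^{m}}=\lambda^{p}$, $\zeta^{p^{m}}=\zeta^{-1}$ when $m$ is odd: feeding this into the M\"obius map shows that the $t$-parameter of $\Phi(\gamma)$ is $t(\gamma)^{p^{m}}$ in the even case and $1/t(\gamma)^{p^{m}}$ in the odd case, and in both cases this collapses to $\Phi\colon\gamma_{i}\mapsto\gamma_{i+j}$, a translation of $\mathbb{Z}/e$, where $\zeta^{j}=t_{0}^{\,p^{m}-1}$ (even) resp.\ $\zeta^{-j}=t_{0}^{\,p^{m}+1}$ (odd); since $\zeta^{p^{m}\mp1}=1$ this $j$ is independent of the choice of $t_{0}$. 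Therefore $f_{\beta}$ is irreducible over $M$ precisely when $\gcd(j,e)=1$, i.e.\ when $t_{0}^{\,p^{m}-1}$ (resp.\ $t_{0}^{\,p^{m}+1}$) is a primitive $e$-th root of unity.

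It remains to count the admissible $w$. Let $N=p^{m}-1$ if $m$ is even and $N=p^{m}+1$ if $m$ is odd, and let $\mathcal{G}$ be the corresponding cyclic group ($M^{*}$ resp.\ $T$); since $p^{m}\equiv(-1)^{m}\pmod{p+1}$ we get $e\mid N$. Define $\chi\colon\mathcal{G}\to\langle\zeta\rangle$ by $\chi(w)=t_{0}^{\,p^{m}-1}$ (even) resp.\ $t_{0}^{\,p^{m}+1}$ (odd), where $t_{0}^{e}=w$; by the remark above this is well defined, and it is a homomorphism. The key point is that $\chi$ is surjective: if $g$ generates $\mathcal{G}$ then $g$ has order $N$, and any $e$-th root $t_{0}$ of $g$ has order $Ne$ (its order $M'$ satisfies $M'/\gcd(M',e)=N$, and $e\mid N$ forces $M'=Ne$), so $\chi(g)=t_{0}^{\,N}$ has order exactly $e$. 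Hence the kernel of $\chi$ has order $N/e$, each primitive $e$-th root of unity has $N/e$ preimages, and so the number of $w\in\mathcal{G}$ with $\chi(w)$ primitive is $\tfrac{\varphi(e)}{e}N$ — equivalently, these are exactly the $w$ that are not $q$-th powers in $\mathcal{G}$ for any prime $q\mid e$. Each such $w$ is $\neq1$ (and $\neq0$ in the even case), hence lies in the range of the bijection $\beta\mapsto w$; so the number of $\beta\in M$ with $f_{\beta}$ irreducible over $M$ is $\tfrac{\varphi(e)}{e}N$, namely $\tfrac{\varphi(e)}{e}(p^{m}-1)$ for $m$ even and $\tfrac{\varphi(e)}{e}(p^{m}+1)$ for $m$ odd.

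The step I expect to be the main obstacle is the Frobenius computation in the odd case: there $\Phi$ interchanges $\lambda$ and $\lambda^{p}$, which inverts the $t$-parameter, and one must check that, once this is combined with $\zeta^{p^{m}}=\zeta^{-1}$, the induced permutation of $\mathbb{Z}/e$ is still a pure translation and not a genuine affine map — together with the parity book-keeping needed to confirm that $\beta\mapsto w$ surjects onto exactly $M\setminus\{1\}$ (even) resp.\ $T\setminus\{1\}$ (odd), so that no admissible value of $w$ is lost or double-counted.
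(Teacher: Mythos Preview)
Your argument is correct, and it takes a genuinely different route from the paper in the odd case. Both proofs begin from the same observation, that over $\mathrm{GF}(p^{2})$ the polynomial $a(x,1)-\beta b(x,1)$ is a linear combination of $(x+\lambda)^{e}$ and $(x+\lambda^{p})^{e}$. For $m$ even the paper does essentially what you do: it rewrites the pencil as $(x+\lambda)^{e}-\beta'(x+\lambda^{p})^{e}$ and counts $\beta'\in M^{*}$ that are not $q$-th powers for any prime $q\mid e$, obtaining $\frac{\varphi(e)}{e}(p^{m}-1)$ by inclusion--exclusion. For $m$ odd, however, the paper is more computational: it writes $f_{\beta}=\gamma(x+\lambda)^{e}+\gamma^{p^{m}}(x+\lambda^{p^{m}})^{e}$, parametrises the pencil by an integer $c$ with $-\gamma^{p^{m}-1}=\omega^{(p^{m}-1)c}$ for a primitive element $\omega$ of $\mathrm{GF}(p^{2m})$, and then proves by an explicit factorisation argument --- splitting into the cases where a putative factorisation degree $t$ is divisible by an odd prime, divisible by $4$, or equal to $2$ --- that $f_{\beta}$ is irreducible iff $\gcd(c,e)=1$. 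Your M\"obius--Kummer substitution replaces this case analysis by a single Galois-theoretic statement: Frobenius permutes the $e$ roots as a translation $i\mapsto i+j$ on $\mathbb{Z}/e$, so irreducibility is simply $\gcd(j,e)=1$, and both parities are handled uniformly via the cyclic group $\mathcal{G}$ of order $N=p^{m}\mp 1$. This is cleaner and more conceptual; the paper's parameter $\gamma^{p^{m}-1}$ and your $w=(\beta+c)/(\beta-c)$ are in fact the same object up to sign (one checks $\gamma^{1-p^{m}}=-w$), so the two counts agree by construction. The one place your write-up is thin is the surjectivity of $\chi$: the assertion that any $e$-th root of a generator of a cyclic group of order $N$ has order exactly $Ne$ when $e\mid N$ is true but not immediate --- it needs a short prime-by-prime check, and you should spell that out.
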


\begin{proof}
First suppose that $m$ is even. Then $\lambda$ and $\lambda^{p}$ lie in $M$,
and the set of $M$-linear combinations of $a(x,1)$ and $b(x,1)$ is the same as
the set of $M$-linear combinations of $(x+\lambda)^{e}$ and $(x+\lambda
^{p})^{e}$. So the number of irreducible polynomials of the form $a(x,1)-\beta
b(x,1)$ with $\beta\in M$ is the same as the number of irreducible polynomials
of the form $(x+\lambda)^{e}-\beta(x+\lambda^{p})^{e}$ with $\beta\in M$.
Since $e|p+1$, $M$ contains the $e^{th}$ roots of unity, and so $(x+\lambda
)^{e}-\beta(x+\lambda^{p})^{e}$ is reducible if and only if $\beta$ is a
$q^{th}$ power of some element of $M$ for some prime $q$ dividing $e$. So the
number of irreducible polynomials $(x+\lambda)^{e}-\beta(x+\lambda^{p})^{e}$
is%
\[
\sum_{d|e}\mu(d)\frac{p^{m}-1}{d}=\frac{\varphi(e)}{e}(p^{m}-1).
\]

Next, suppose that $m$ is odd, so that $\lambda,\lambda^{p}\notin M$. Let
$L=\,$GF$(p^{2m})$. So $L$ is an extension field of $M$ with $|L:M|=2$. The
field $L$ contains $\lambda,\lambda^{p}$, and also contains the $e^{th}$
roots of unity. Since $m$ is odd, $\lambda^{p}=\lambda^{p^{m}}$. If $\beta\in
M$ then%
\[
a(x,1)-\beta b(x,1)=(\frac{1}{2}+\frac{\beta}{e(\lambda-\lambda^{p^{m}}%
)})(x+\lambda)^{e}+(\frac{1}{2}-\frac{\beta}{e(\lambda-\lambda^{p^{m}}%
)})(x+\lambda^{p^{m}})^{e}.
\]
If we set $\gamma=\frac{1}{2}+\frac{\beta}{e(\lambda-\lambda^{p^{m}})}\in L$
then%
\[
a(x,1)-\beta b(x,1)=\gamma(x+\lambda)^{e}+\gamma^{p^{m}}(x+\lambda^{p^{m}%
})^{e}.
\]
Conversely, if $\gamma\in L$ then $\gamma(x+\lambda)^{e}+\gamma^{p^{m}%
}(x+\lambda^{p^{m}})^{e}$ is an $M$-linear combination of $a(x,1)$ and
$b(x,1)$. Note that $\gamma(x+\lambda)^{e}+\gamma^{p^{m}}(x+\lambda^{p^{m}%
})^{e}$ is a scalar multiple of $\delta(x+\lambda)^{e}+\delta^{p^{m}%
}(x+\lambda^{p^{m}})^{e}$ if and only if $\gamma^{p^{m}-1}=\delta^{p^{m}-1}$.
Let $\omega$ be a primitive element in $L$. Then $-1=\omega^{(p^{m}%
-1)(p^{m}+1)/2}$. We let $\zeta=\omega^{(p^{m}+1)/2}$ so that $\zeta^{p^{m}%
-1}=-1$. So%
\[
-\gamma^{p^{m}-1}=(\zeta\gamma)^{p^{m}-1}=\omega^{(p^{m}-1)c}%
\]
for some $c$ with $1\leq c\leq p^{m}+1$. Note that this implies that there are
$p^{m}+1$ different values of $\gamma^{p^{m}-1}$. We show that $\gamma
(x+\lambda)^{e}+\gamma^{p^{m}}(x+\lambda^{p^{m}})^{e}$ is irreducible over $M$
if and only if $c$ is coprime to $e$. Since $e|p^{m}+1$ this implies that
there are $\frac{\varphi(e)}{e}(p^{m}+1)$ different values of $\gamma
^{p^{m}-1}$ yielding polynomials $\gamma(x+\lambda)^{e}+\gamma^{p^{m}%
}(x+\lambda^{p^{m}})^{e}$ which are irreducible over $M$. This in turn implies
that there are $\frac{\varphi(e)}{e}(p^{m}+1)$ values of $\beta\in M$ such
that $a(x,1)-\beta b(x,1)$ is irreducible over $M$, as claimed.

So suppose that $c$ is not coprime to $e$. Then there is some prime $q$
dividing $e$ which also divides $c$. Write $c=qd$, and let $\delta=\omega^{d}%
$. Then $-\gamma^{p^{m}-1}=\delta^{(p^{m}-1)q}$. So $(x+\lambda)^{e}%
+\gamma^{p^{m}-1}(x+\lambda^{p^{m}})^{e}$ is divisible by
\[
(x+\lambda)^{e/q}-\delta^{p^{m}-1}(x+\lambda^{p^{m}})^{e/q}=(x+\lambda
)^{e/q}+(\zeta\delta)^{p^{m}-1}(x+\lambda^{p^{m}})^{e/q}%
\]
and this implies that $\gamma(x+\lambda)^{e}+\gamma^{p^{m}}(x+\lambda^{p^{m}%
})^{e}$ is divisible by
\[
\zeta\delta(x+\lambda)^{e/q}+(\zeta\delta)^{p^{m}}(x+\lambda^{p^{m}})^{e/q}.
\]

Conversely assume that $\gamma(x+\lambda)^{e}+\gamma^{p^{m}}(x+\lambda^{p^{m}%
})^{e}$ is not irreducible over $M$. Then $(x+\lambda)^{e}+\gamma^{p^{m}%
-1}(x+\lambda^{p^{m}})^{e}$ is not irreducible over $L$ and so (since $L$
contains the $e^{th}$ roots of unity)
$(x+\lambda)^{e}+\gamma^{p^{m}-1}(x+\lambda^{p^{m}})^{e}$
has a factorization
\[
\left(  (x+\lambda)^{e/t}-\alpha_{1}(x+\lambda^{p^{m}})^{e/t}\right)
\ldots\left(  (x+\lambda)^{e/t}-\alpha_{t}(x+\lambda^{p^{m}})^{e/t}\right)
\]
for some $t>1$, with $\alpha_{1}^{t}=\alpha_{2}^{t}=\ldots=\alpha_{t}%
^{t}=-\gamma^{p^{m}-1}$.

First consider the case when $t$ is divisible by an odd prime $q$. Then
$-\gamma^{p^{m}-1}=\omega^{(p^{m}-1)c}$ is a $q^{th}$ power, and since $q$ is
coprime to $p^{m}-1$ but divides $p^{m}+1$ this implies that $q|c$.

Next consider the case when $t$ is divisible by 4. Note that since $t|e$ and
$e|p+1$ this implies that $\frac{p^{m}-1}{2}$ is odd, and also implies that
$2$ is a prime dividing $e$. Now $-\gamma^{p^{m}-1}=\omega^{(p^{m}-1)c}$ is a
fourth power of an element of $L$, and since $\frac{p^{m}-1}{2}$ is odd, this
implies $2|c$.

Finally suppose that $(x+\lambda)^{e}+\gamma^{p^{m}-1}(x+\lambda^{p^{m}})^{e}$
has no factorization with $t$ divisible by an odd prime or $t$ divisible
by 4. Since we are assuming that $(x+\lambda)^{e}+\gamma^{p^{m}-1}%
(x+\lambda^{p^{m}})^{e}$ is not irreducible the only possibility left is that
$e$ is even and that
$(x+\lambda)^{e}+\gamma^{p^{m}-1}(x+\lambda^{p^{m}})^{e}$
equals
\[
\left(  (x+\lambda)^{e/2}-\alpha(x+\lambda^{p^{m}})^{e/2}\right)  
\left(  (x+\lambda)^{e/2}+\alpha(x+\lambda^{p^{m}})^{e/2}\right)
\]
for some $\alpha$ with $-\alpha^{2}=\gamma^{p^{m}-1}$. Since there is no
factorization with $t>2$ the factors $(x+\lambda)^{e/2}\pm\alpha
(x+\lambda^{p^{m}})^{e/2}$ are irreducible. So $\gamma(x+\lambda)^{e}%
+\gamma^{p^{m}}(x+\lambda^{p^{m}})^{e}$ can only factorize over $M$ if
$\alpha=\delta^{p^{m}-1}$ for some $\delta\in L$. But then%
\[
\delta^{(p^{m}-1)2}=\alpha^{2}=-\gamma^{p^{m}-1}=\omega^{(p^{m}-1)c}%
\]
so that $2|c$.

This completes the proof of Lemma 1.
\end{proof}

So let $g=\left(
\begin{array}
[c]{cc}%
0 & r\\
1 & s
\end{array}
\right)  $ where $x^{2}-sx-r$ is irreducible over GF$(p)$, let $g$ have
eigenvalues $\lambda,\lambda^{p}$, and suppose that $gN$ has order $e|p+1$. As
we have seen, fix$(g)=0$ unless $e|n$. So assume that $e|n$ and write
$\frac{n}{e}=kr$ where $k$ is the largest divisor of $n$ which is coprime to
$e$. Let $F=\,$GF$(p)$, $K=\,$GF$(p^{kr})$, and let $L=\,$GF$(p^{n})$. As we
have seen, to calculate fix$(g)$ we need to count the number of elements
$\beta\in K$ such that $K=F(\beta)$, and such that $a(x,1)-\beta b(x,1)$ is
irreducible over $K$. Let $S$ be the set of elements $\beta\in K$ such that
$a(x,1)-\beta b(x,1)$ is irreducible over $K$. By Lemma 1,
\[
|S|=\frac{\varphi(e)}{e}(p^{kr}-1+2(kr\operatorname{mod}2)),
\]
but we need to subtract away the number of elements of $S$ which lie in proper
subfields of $K$.

So let $M$ be a proper subfield of $K$ and suppose that $|K:M|=t>1$.

First we show that if $t$ is not coprime to $e$ then $|S\cap M|=0$. So suppose
that $t$ is not coprime to $e$ but that $\beta\in S\cap M$. Then $a(x,1)-\beta
b(x,1)$ is irreducible over $M$, and so the splitting field $P$ of
$a(x,1)-\beta b(x,1)$ over $M$ has degree $e$ over $M$. Let $s=\gcd(e,t)$.
Then there is a subfield $Q$ with $M<Q\leq K$ such that $|Q:M|=s$ and $M<Q\leq
P$ with $|P:Q|=\frac{e}{s}$. But this implies that the splitting field of
$a(x,1)-\beta b(x,1)$ over $K$ has degree $\frac{e}{s}$ over $K$, so that
$a(x,1)-\beta b(x,1)$ is not irreducible over $K$.

Next consider the case when $t$ is coprime to $e$. We show that if $\beta\in
M$ and if $a(x,1)-\beta b(x,1)$ is irreducible over $M$ then $\beta\in S $, so
that%
\[
|M\cap S|=\frac{\varphi(e)}{e}(p^{kr/t}-1+2(\frac{kr}{t}\operatorname{mod}2)).
\]
So suppose that $\beta\in M$ and that $a(x,1)-\beta b(x,1)$ is irreducible
over $M$. Let $P\leq L$ be a splitting field for $a(x,1)-\beta b(x,1)$ over
$M$, and let $\alpha$ be a root of $a(x,1)-\beta b(x,1)$ in $P$. So
$P=M(\alpha)$. Since $|K:M|$ and $|P:M|$ are coprime with $|K:M|.|P:M|=|L:M|$,
$L=K(\alpha)$, and so $\beta\in S$.

So we see that the number of elements of $S$ which do not lie in any proper
subfield of $K$ is%
\[
\frac{\varphi(e)}{e}\sum_{d|k}\mu(\frac{k}{d})(p^{rd}-1+2(rd\operatorname{mod}%
2)),
\]
and hence that%
\[
\text{fix}(g)=C(p,n,e).
\]

\end{document}